\newtheorem{theorem}{Theorem}[section]
\newtheorem{lemma}[theorem]{Lemma}
\newtheorem{corollary}[theorem]{Corollary}
\newtheorem*{corollary*}{Corollary}
\newtheorem{proposition}[theorem]{Proposition}
\newtheorem{example}[theorem]{Example}
\newtheorem*{conjecture}{Conjecture}
\newtheorem{remark}{Remark}
\newtheorem{definition}[theorem]{Definition}
\newcommand{\bs}{\backslash}
\def\N{\mathbb{N}}
\def\Z{\mathbb{Z}}
\newcommand{\la}{\langle}
\newcommand{\ra}{\rangle}
\newcommand{\R}{\mathbb R}
\def\mC{\mathcal C}
\newcommand{\kwpqd}{KW_D(p,q)}
\renewcommand{\k}{\Bbbk}
\newcommand{\precdot}{\mathrel{\prec\mathrel{\mkern-8mu}\scalerel*{\cdot}{\prec}}}
\DeclareMathOperator{\ap}{Ap}
\title{The Betti Numbers of Kunz-Waldi Semigroups}
\author[M. Gonz\'alez-S\'anchez]{Mario Gonz\'alez-S\'anchez} \address{Instituto de Investigaci\'on en Matem\'aticas de la Universidad de Valladolid (IMUVA), Universidad de Valladolid, 47011 Valladolid, Spain.}
\email{mario.gonzalez.sanchez@uva.es}
\author[S. Singh]{Srishti Singh}\address{Department of Mathematics, University of Missouri, Columbia MO 65211, USA.}
\email{spkdq@missouri.edu}
\author[H. Srinivasan]{Hema Srinivasan}
\address{Department of Mathematics, University of Missouri, Columbia MO 65211, USA.}
\email{srinivasanh@missouri.edu}
\thanks{The first author has been supported in part by the grant PID2022-137283NB-C22 funded by MICIU/AEI/ 10.13039/501100011033 and by ERDF/EU, and thanks financial support from European Social Fund, {\it Programa Operativo de Castilla y Le\'on}, and {\it Consejer\'ia de Educaci\'on de la Junta de Castilla y Le\'on.}\\This collaboration is the outcome of the first author's visit to the University of Missouri during Fall 2024.
}
\thanks{The third author is supported by a grant from Simons Foundation.}
\subjclass[2020]{Primary 13D02, 13D05; Secondary 20M14}
\keywords{}
\begin{document}

\begin{abstract}
Given two coprime numbers $p<q$, KW semigroups contain $p,q$ and are contained in $\la p,q,r \ra$ where $2r= p,q, p+q$ whichever is even.  These semigroups were first introduced by Kunz and Waldi.  Kunz and Waldi proved that all $KW$ semigroups of embedding dimension $n\geq 4$ have Cohen-Macaulay type $n-1$ and first Betti number ${n \choose 2}$.  In this paper, we characterize KW semigroups whose defining ideal is generated by the $2\times 2$ minors of a $2\times n$ matrix. In addition, we identify all KW semigroups that lie on the interior of the same face of the Kunz cone $\mC_p$ as a KW semigroup with determinantal defining ideal. Thus, we provide an explicit formula for the Betti numbers of all those KW semigroups. 

\end{abstract}

\maketitle

\section*{Introduction}

Let $3\leq p < q$ be two relatively prime integers. In \cite{Kunz2014}, Kunz and Waldi build numerical semigroups with multiplicity $p$ by filling in the gaps of the symmetric semigroup $\la p,q \ra$ in a systematic way. In particular, they characterize all numerical semigroups $\la p,q \ra \subsetneq H \subset \la p,q,r \ra$, where $r=p/2$ if p is even, $r=q/2$ if $q$ is even, and $r = (p+q)/2$ otherwise. Such semigroups $H$ are in one-to-one correspondence to the lattice paths, with right and downward steps, in the rectangle $R \subset \R^2$ with vertices $(0,0)$, $(0,p'-1)$, $(q'-1,p'-1)$, and $(q'-1,0)$, where $p' = \lfloor p/2 \rfloor$ and $q' = \lfloor q/2 \rfloor$. We call these semigroups Kunz-Waldi semigroups and denote by $KW(p,q)$ the class formed by all of them. In fact, a semigroup $H = \la p,q,h_1,\ldots,h_{n-2} \ra \in KW(p,q)$ if and only if $h_i = pq-x_ip-y_iq$ satisfying $0 < x_1 < \dots < x_{n-2} \leq q/2$ and $p/2 \geq y_1 > \dots > y_{n-2} > 0$, as observed in Remark 1 of \cite{Singh2024}, and the embedding dimension of $H$ is $e(H) = n$. \newline

Let $H = \la p,q,h_1,\ldots,h_{n-2} \ra \in KW(p,q)$ and fix a field $\k$. Consider the polynomial ring $T= \k[u,v,u_1,...,u_{n-2}]$ graded by $H$ via $\deg(u)=p$, $\deg(v) = q$ and $\deg(u_i) = h_i$ for all $1\leq i \leq n-2$, and let $t$ be another unknown. Define $\phi_H : T \to \k[t]$ by $u \mapsto t^{p}$, $v \mapsto t^q$, $u_i \mapsto t^{h_i}$, $1 \leq i \leq n-2$. The {\it defining ideal of $H$} is $I_H =\ker \phi_H$, and it is homogeneous, prime, and binomial. In line with the convention of calling a numerical semigroup $H$ Gorenstein or Complete Intersection when $I_H$ is Gorenstein or Complete Intersection, we say $H$ is determinantal if the ideal $I_H$ is a determinantal ideal.  We know from the theorem in Appendix A of \cite{KUNZ2017397} that $I_H$ is minimally generated by the following ${n \choose 2 }$ homogeneous binomials:
    \begin{align*}
        f_{ij} &= u_iu_j-u^{q-x_i-x_j}v^{p-y_i-y_j}, \quad 1 \leq i \leq j \leq n-2 \\
        g_i &= v^{y_i-y_{i+1}}u_i-u^{x_{i+1}-x_i}u_{i+1}, \quad 1 \leq i \leq n-3 \\
        \eta_1 &= v^{p-y_1}-u^{x_1}u_1 \\
        \eta_2 &= v^{y_{n-2}}u_{n-2}-u^{q-x_{n-2}}.
    \end{align*}
Since $I_H$ is homogeneous, the semigroup algebra $\k[H] \simeq T/I_H$ is a $H$-graded module, and we can consider a minimal graded free resolution of $T/I_H$,
\[0 \to T^{\beta_{n-1}} \to T^{\beta_{n-2}} \to \dots \to T^{\beta_1} \to T \to T/I_H \to 0 \, .\]
The numbers $\beta_i$, $0\leq i\leq n-1$, are the {\it Betti numbers} of $T/I_H$. Note that $\beta_0=1$, $\beta_1 = {n \choose 2}$ is the number of elements in any minimal generating set of $I_H$ formed by homogeneous polynomials, and $\beta_{n-1} = n-1$ by Corollary 3.1 of \cite{KUNZ2017397}. \newline

Given a numerical semigroup $S \subset \N$, let $m = \min\left( S\bs \{0\} \right)$ be its multiplicity, and write 
\[\ap(S) = \{s \in S \mid s-m \notin S\} = \{0,a_1,\ldots,a_{m-1} \}\]
for the {\it Apéry set} of $S$ with respect to $m$, where for all $1\leq i \leq m-1$, $a_i\in S$ is the minimum element in $S$ congruent to $i$ modulo $m$. The {\it Apéry coordinate vector of $S$} with respect to $m$ is the tuple $(a_1,\ldots,a_{m-1})$. For each $m\in \Z_{>0}$, $m\geq 2$, the {\it Kunz cone} $\mC_m \subset \R_{\geq 0}^{m-1}$ is a pointed cone with defining inequalities $a_i +a_j \geq a_{i+j}$ whenever $i +j \neq 0$, where $i,j \in \Z_m \bs \{0\}$. If $S_1,S_2$ are two numerical semigroups of multiplicity $m$ whose Apéry coordinate vectors lie on the interior of the same face of the Kunz cone $\mC_m$, then they have the same Betti numbers by Theorem 2.7 of \cite{Braun2023}. \newline

The study of determinantal ideals is a well-developed area in commutative algebra, as these ideals provide extensive information about the structure of the ring. In particular, their minimal free resolutions are completely understood. In \cite{Goto2018} and \cite{VanKien2020}, the authors give criteria for a numerical semigroup of embedding dimension $n$ to have the defining ideal generated by the $2\times 2$ minors of a $2\times n$ matrix in terms of Pseudo-Frobenius numbers, in some specific cases. In \cite{Singh2024}, the authors examine the defining ideals of KW semigroups and show that the ideal can be written as a sum of determinantal ideals (Theorem $4.1$, \cite{Singh2024}). However, the results did not extend beyond this, and a key motivation was to determine whether any KW semigroups are determinantal.

In this paper, we characterize the semigroups $H \in KW(p,q)$ whose defining ideal is minimally generated by the $2\times 2$ minors of a $2\times n$ matrix, in terms of the Pseudo-Frobenius numbers of $H$ (Proposition \ref{prop:determinantal}), and later we use the Kunz cone $C_p$ to compute the Betti numbers of a larger class. These determinantal semigroups from a subclass $\kwpqd$ of $KW(p,q)$, defined as follows: a semigroup $H = \la p,q,h_1,\ldots,h_{n-2} \ra \in KW(p,q)$ is in $\kwpqd$ if  there exist $x,y\in \Z_{>0}$ with $(n-2)x \leq q/2$, $(n-2)y \leq p/2$, and $x_i = ix$, $y_i = (n-1-i)y$ for all $1\leq i \leq n-2$. In Proposition \ref{prop:determinantal} we prove that the $2\times 2$ minors of the matrix
\[\begin{bmatrix}
u_{n-2} & u^x & v^{p-(n-1)y} & u_1 & u_2 & \cdots & u_{n-4} & u_{n-3} \\
u^{q-(n-1)x} & v^y & u_1 & u_2 & u_3 & \cdots &  u_{n-3} & u_{n-2}
\end{bmatrix} \]
generate the ideal $I_H$ for a semigroup $H \in \kwpqd$, and in Theorem \ref{thm:determinantal} we prove that the semigroups in $\kwpqd$ are the only ones in $KW(p,q)$ whose defining ideal is determinantal. 
For any semigroup $H \in \kwpqd$, the Eagon-Northcott complex \cite{Eagon1962} resolves the ideal $I_H$. Hence, their Betti numbers are given by $\beta_i = i {n \choose i+1}$ for all $1\leq i \leq n-1$. Moreover, for two semigroups $H,G \in KW(p,q)$, we characterize when $G$ and $H$ lie on the interior of the same face of the Kunz cone $\mC_p$ in Proposition \ref{prop:KWfaces}. As a consequence, we obtain a larger subclass of semigroups of $KW(p,q)$ whose Betti numbers are given by $\beta_i = i {n \choose i+1}$, $1\leq i \leq n-1$.

\section{When is a $KW(p,q)$- semigroup determinantal?}

The goal of this section of to completely characterize all Kunz-Waldi numerical semigruops whose defining ideals are determinantal. To this end, we give a necessary and sufficient condition for any $H \in KW (p,q)$ to have a determinantal defining ideal $I_H$ (\ref{thm:determinantal}) and provide a matrix $A$ such that $I_H = I_{2 \times 2}(A)$ (\ref{prop:determinantal}), i.e. $I_H$ is generated by the $2\times 2$ minors of $A$.

\begin{definition}\label{determinantal}
Let $3 \leq p < q$ be relatively prime. We define a subclass $\kwpqd$ of $KW(p,q)$ as follows. A semigroup $H = \la p,q,h_1,\ldots,h_{n-2} \ra \in KW(p,q)$ is in $\kwpqd$ if there exist $x,y \in \Z_{>0}$ such that $(n-2)x \leq q/2$, $(n-2)y \leq p/2$, and $h_i = pq-x_ip-y_iq$, where $x_i = ix$ and $y_i = (n-1-i)y$ for all $1\leq i \leq n-2$.
\end{definition}

The notation chosen to denote this class will be justified after Theorem \ref{thm:determinantal}, where we prove that the semigroups in $\kwpqd$ are precisely all the semigroups in $KW(p,q)$ with determinantal defining ideal.

\begin{remark}
Let $H \in \kwpqd$ for some $x,y \in \Z_{>0}$. Note that $h_{i+1}-h_i = yq-xp$ for all $1\leq i \leq n-3$. Thus, the generators $h_1,h_2,\ldots,h_{n-2}$ necessarily form an arithmetic sequence with common difference $yq-xp$. Note that this sequence is increasing if $yq>xp$ and decreasing otherwise.
\end{remark}

    \begin{proposition}\label{prop:determinantal}
        If $H \in \kwpqd$ for some $x,y,n\in \Z_{>0}$
        then $I_H$ is generated by the $2 \times 2$ minors of the following $2 \times n$ matrix $$ A=
        \begin{bmatrix}
            u_{n-2} & u^x & v^{p-(n-1)y} & u_1 & u_2 & \cdots & u_{n-4} & u_{n-3} \\
            u^{q-(n-1)x} & v^y & u_1 & u_2 & u_3 & \cdots &  u_{n-3} & u_{n-2}
        \end{bmatrix} \, .$$
 \end{proposition}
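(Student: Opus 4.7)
The plan is to prove the equality by showing both inclusions. For $I_{2\times 2}(A) \subseteq I_H$, I would verify that each of the $\binom{n}{2}$ $2\times 2$ minors $M(a, b) := A_{1a}A_{2b} - A_{1b}A_{2a}$ is a binomial whose two monomials have equal $H$-degree under $\phi_H$, a mechanical check using $h_i = pq - ixp - (n-1-i)yq$. The hypotheses $(n-2)x \leq q/2$ and $(n-2)y \leq p/2$ ensure that all exponents occurring in the minors and in the identities below are non-negative.

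For the reverse inclusion, I would exhibit every minimal generator of $I_H$ as a $T$-linear combination of minors. Four families are immediate from the column structure of $A$:
\[ \eta_2 = M(1,2), \q \eta_1 = -M(2,3), \q g_i = -M(2, i+3) \text{ for } 1 \leq i \leq n-3, \q f_{1, n-2} = M(1,3). \]
To handle the remaining $f_{ij}$, I would first establish by induction that the auxiliary binomials $\varepsilon_i := v^{p - y_i} - u^{x_i} u_i$ and $\tau_i := v^{y_i} u_i - u^{q - x_i}$ lie in $(\eta_1, \eta_2, g_1, \ldots, g_{n-3}) \subseteq I_{2\times 2}(A)$ for every $1 \leq i \leq n-2$, via the recursions $\varepsilon_{i+1} = v^y \varepsilon_i + u^{x_i} g_i$ and $\tau_{i-1} = u^x \tau_i + v^{y_i} g_{i-1}$, with base cases $\varepsilon_1 = \eta_1$ and $\tau_{n-2} = \eta_2$.

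With these in hand, the identities
\[ M(1, b) = f_{b-2, n-2} + u^{q-(n+b-4)x}\,\varepsilon_{b-3}, \q M(3, b) = -f_{1, b-3} + v^{p-(2n-b)y}\,\tau_{b-2} \]
valid for $4 \leq b \leq n$ place all boundary generators $f_{i, n-2}$ ($2 \leq i \leq n-2$) and $f_{1, j}$ ($1 \leq j \leq n-3$) into $I_{2\times 2}(A)$. For interior $f_{ij}$ with $2 \leq i \leq j \leq n-3$, the key identity
\[ M(a, b) = f_{a-3, b-2} - f_{a-2, b-3} \q (4 \leq a < b \leq n) \]
links $f_{i, j}$ to $f_{i-1, j+1}$ along the anti-diagonal $i+j = \text{const}$. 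Iterating this link reduces any interior $f_{ij}$ to a boundary generator, either $f_{1, i+j-1}$ (when $i + j \leq n - 1$) or $f_{i+j-n+2, n-2}$ (when $i + j \geq n - 1$), both already shown to lie in $I_{2\times 2}(A)$.

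The main obstacle I anticipate is the inductive bookkeeping for the auxiliary binomials $\varepsilon_i, \tau_i$ and the anti-diagonal reduction for interior $f_{ij}$'s, in particular ensuring that every diagonal entry $f_{i,i}$ is reached and that each intermediate index remains in the valid range $1 \leq i \leq j \leq n - 2$. The algebraic identities themselves are elementary once the indexing is set up correctly.
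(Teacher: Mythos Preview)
Your proposal is correct and follows essentially the same route as the paper: the immediate identifications $\eta_1=-M(2,3)$, $\eta_2=M(1,2)$, $g_i=-M(2,i+3)$, $f_{1,n-2}=M(1,3)$ and the anti-diagonal recursion $f_{i,j}=f_{i-1,j+1}-M(i+2,j+3)$ are exactly what the paper uses. The only cosmetic difference is that the paper writes out closed-form sums for $f_{1,j}$ and $f_{i,n-2}$ directly in terms of the minors $a_{2,\bullet}$ and $a_{12}$, whereas you package the same telescoping into the auxiliary binomials $\varepsilon_i,\tau_i$ and their recursions; unfolding your recursions reproduces the paper's sums verbatim.
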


\begin{proof}
Let $a_{ij}$ denote the determinant of the submatrix of $A$ obtained from the $i$- and $j$-th columns, and $I_2(A)$ be the ideal generated by all the $a_{ij}$. Each $a_{ij}$ is a $H$-homogeneous binomial and thus, is in $I_H$. Therefore, $I_2(A) \subset I_H$. Let us prove the other inclusion. 
Note that for all $1\leq i \leq n-3$, $g_i = -a_{2,i+3}$; $\eta_1 = -a_{23}$, and $\eta_2 = a_{12}$, so all of them are in $I_2(A)$. To finish the proof, it suffices to show that $f_{ij} \in I_2(A)$ for all $1\leq i\leq j \leq n-2$. We have $f_{1,n-2} = a_{13} \in I_2(A)$ and for all $1\leq j \leq n-3$, one can check that
\[f_{1,j} = -a_{3,j+3} - \sum_{k=0}^{n-j-4} \left( u^{kx} v^{p-(n+k)y} a_{2,j+4+k} \right) + u^{(n-j-3)x} v^{p-(2n-j-3)y} a_{12} \in I_2(A) \, .\]
Moreover, for all $2\leq i \leq n-2$,
\[f_{i,n-2} = a_{1,i+2} + \sum_{k=0}^{i-2} \left( u^{q-(n+k)x}v^{ky} a_{2,i+1-k} \right) \in I_2(A) \, .\]
Finally, note that for all $2\leq i \leq j \leq n-3$, $f_{i,j} = f_{i-1,j+1} - a_{i+2,j+3}$, and hence all the $f_{ij}$ belong to $I_2(A)$.
\end{proof}

When Kunz and Waldi initially construct these numerical semigroups of multiplicity $p$ obtained by adjoining arbitrary gaps of $\la p,q \ra$, no conditions are imposed on the gaps. In this initial work, they discuss the set of Pseudo-Frobenius elements of these semigroups (page 668, \cite{Kunz2014}). Later, they prove that if the gaps are chosen so that the resulting numerical semigroups belong to $KW(p,q)$, then their type is one less than the embedding dimension  (Example 4.6, \cite{KUNZ2017397}). From this, one can deduce the complete set of Pseudo-Frobenius elements for any $H \in KW(p,q)$, as we will illustrate below.

\begin{lemma} \label{lemma:PF(H)}
    Let $H \in KW(p,q)$ of embedding dimension $n$. Then $PF(H)=\{g_i:=pq-(x_i+1)p-(y_{i+1}+1)q \mid 0 \leq i \leq n-2\}$, where $x_0=y_{n-1}=0$.
\end{lemma}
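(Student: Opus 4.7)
Plan: By Example 4.6 of \cite{KUNZ2017397}, the Cohen-Macaulay type of $H$ equals $n - 1$, so $|PF(H)| = n - 1$. The proposed set $\{g_i \mid 0 \le i \le n - 2\}$ contains $n - 1$ candidates, and they are pairwise distinct because $g_i \equiv -(y_{i+1}+1)q \pmod p$ and the $n-1$ integers $\{1\} \cup \{y_j + 1 : 1 \le j \le n-2\} \subset \{1, 2, \ldots, \lfloor p/2 \rfloor + 1\}$ are pairwise noncongruent modulo $p$ (using $\gcd(p,q) = 1$). Hence it suffices to show each $g_i \in PF(H)$ via the standard criterion: $f \in PF(H)$ \iff $f \notin H$ and $f + s \in H$ for every minimal generator $s \in \{p, q, h_1, \ldots, h_{n-2}\}$.

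The containments $g_i + s \in H$ are direct computations using the conventions $x_0 = y_{n-1} = 0$. For $1 \le i \le n-2$, $g_i + p = h_i + (y_i - y_{i+1} - 1)q \in H$; for $0 \le i \le n-3$, $g_i + q = h_{i+1} + (x_{i+1} - x_i - 1)p \in H$; the boundary values $g_0 + p = (p - y_1 - 1)q$ and $g_{n-2} + q = (q - x_{n-2} - 1)p$ lie in $\langle q \rangle$ and $\langle p \rangle$ respectively. All coefficients are nonnegative by the strict monotonicity of $(x_j), (y_j)$ and the bounds $y_1 \le \lfloor p/2 \rfloor$, $x_{n-2} \le \lfloor q/2 \rfloor$. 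For $g_i + h_j$, substituting $g_i = h_i - p + (y_i - y_{i+1} - 1)q$ and applying the binomial relation $h_i + h_j = (q - x_i - x_j)p + (p - y_i - y_j)q$ (which comes from $f_{ij} \in I_H$) yields
\[g_i + h_j = (q - x_i - x_j - 1)p + (p - y_{i+1} - y_j - 1)q \in \langle p, q\rangle \subset H,\]
with nonnegative coefficients in the generic range; any extremal boundary configuration where equality threatens is handled by a direct rewriting using $g_i + p$ or $g_i + q$ already shown above.

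The main obstacle is $g_i \notin H$. I argue by contradiction: assume $g_i = ap + bq + \sum_k c_k h_k$ with $a, b, c_k \in \N$, set $C = \sum c_k$, $X = \sum c_k x_k$, $Y = \sum c_k y_k$, and rearrange to
\[(1 - C) pq = (a - X + x_i + 1)p + (b - Y + y_{i+1} + 1)q.\]
The case $C = 0$ fails because the least element of $\langle p,q\rangle$ in the residue class $g_i \pmod p$ is $(p - y_{i+1} - 1)q$, strictly exceeding $g_i = (p - y_{i+1} - 1)q - (x_i + 1)p$. For $C = 1$ with $c_k = 1$, the bounds $|x_k - x_i - 1| < q$ and $|y_k - y_{i+1} - 1| < p$ combined with $\gcd(p,q) = 1$ force $a = x_k - x_i - 1$ and $b = y_k - y_{i+1} - 1$, and nonnegativity simultaneously demands $k > i$ and $k \le i$, impossible. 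For $C \ge 2$, the inequalities $X \le C \lfloor q/2 \rfloor$ and $Y \le C \lfloor p/2 \rfloor$, together with $a, b \ge 0$, rule out any consistent integer solution to the displayed identity; this case is the most delicate and the technical heart of the argument.
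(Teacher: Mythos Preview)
Your overall strategy is sound but runs in the \emph{opposite direction} from the paper's. You prove the inclusion $\{g_i\}\subseteq PF(H)$ by verifying $g_i+s\in H$ for every minimal generator $s$; the paper instead proves $PF(H)\subseteq\{g_i\}$ by identifying $\{g_i\}$ with the set $\mathcal H=\{h\notin H:h+p\in H,\ h+q\in H\}$ via the lattice-path picture, noting that $PF(H)\subseteq\mathcal H$ is automatic, and then matching cardinalities. The payoff of the paper's direction is that it never needs to check $g_i+h_j\in H$ at all, nor does it need your case analysis for $g_i\notin H$: both facts are read off the staircase picture (the $g_i$ are exactly the ``inner corners'' of the path). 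Your direction trades that geometric input for explicit computations, which is fine, but it costs you the extra work on $g_i+h_j$ and on $g_i\notin H$.

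There is a genuine gap in your argument for $g_i\notin H$ when $C\ge 2$: the inequalities $X\le C\lfloor q/2\rfloor$, $Y\le C\lfloor p/2\rfloor$ together with $a,b\ge 0$ do \emph{not} by themselves contradict $(1-C)pq=(a-X+x_i+1)p+(b-Y+y_{i+1}+1)q$; plugging in the bounds only gives the right-hand side $\ge -Cpq+(x_i+1)p+(y_{i+1}+1)q$, which is perfectly compatible with $(1-C)pq$. You flag this as ``the technical heart'' but never close it. The clean fix is already implicit in what you wrote for $g_i+h_j$: the relation $f_{jk}$ gives $h_j+h_k=(q-x_j-x_k)p+(p-y_j-y_k)q\in\langle p,q\rangle$ (both coefficients are $\ge 0$ since $x_j+x_k\le q$ and $y_j+y_k\le p$), so any factorization with $C\ge 2$ can be rewritten with $C$ reduced by $2$, and you are back to $C\in\{0,1\}$. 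Even simpler: your own computation shows $g_i+p=h_i+(y_i-y_{i+1}-1)q$ (resp.\ $(p-y_1-1)q$ for $i=0$), which is visibly the least element of $H$ in its residue class modulo $p$; hence $g_i=(g_i+p)-p\notin H$ immediately, and the entire $C$-case analysis becomes unnecessary. Either patch completes your proof.
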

\begin{proof}

Set $x_0=y_{n-1}=0$ and let  $\mathcal H = \{g_i:=pq-(x_i+1)p-(y_{i+1}+1)q \mid 0 \leq i \leq n-2\}$. Recall from the introduction that $H= \la p,q, h_1,...,h_{n-2} \ra$ is in one-to-one correspondence to a lattice path $l$ in the rectangle $R$. In particular, each $h_i$ corresponds to the point $(x_i-1,y_i-1)$ under the line $pq-p-q=px+qy$ (\cite{Kunz2014}). To see that $\mathcal H = PF(H)$, first note that $\mathcal{H} = \{h\in \N \setminus H \mid h+p \in H, h+q \in H\}$. This is clear from Figure \ref{fig:pseudofrob}. Hence, $PF(H) \subset \mathcal{H}$, and the equality $PF(H) = \mathcal{H}$ follows from the fact $|\mathcal{H}| = n-1 = |PF(H)|$ (the type of $H$ is $n-1$).
\end{proof}

\begin{figure}[htbp]
\centering
\includegraphics[width=1\textwidth]{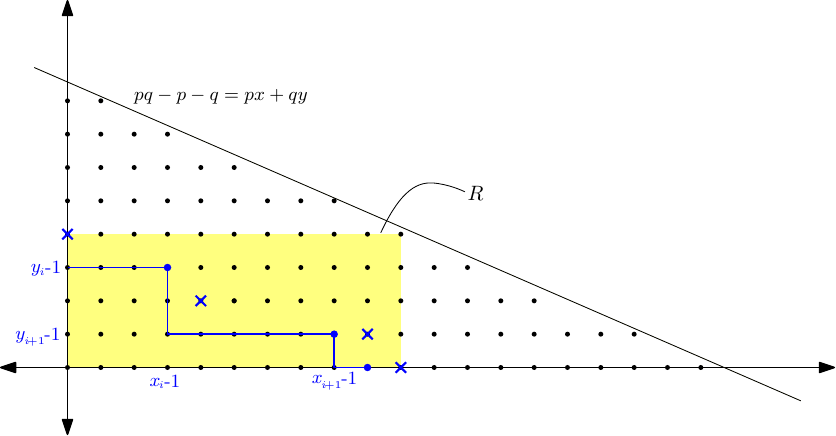}
\caption{Pseudo Frobenius Elements of $H \in KW(p,q)$ correspond to the points $\times$.}
\label{fig:pseudofrob}
\end{figure}

\begin{theorem}\label{thm:determinantal}
Let $H \in KW(p,q)$ of embedding dimension $n$. Then the following are equivalent:
\begin{enumerate}
    \item\label{thm:determinantal_1} $H \in \kwpqd$.
    \item\label{thm:determinantal_2} There exist positive integers $x,y$ such that $(n-2)y \leq p/2$, $(n-2)x \leq q/2$ and $I_H$ is generated by the $2\times 2$ minors of the matrix \[\begin{bmatrix}
            u_{n-2} & u^x & v^{p-(n-1)y} & u_1 & u_2 & \cdots & u_{n-4} & u_{n-3} \\
            u^{q-(n-1)x} & v^y & u_1 & u_2 & u_3 & \cdots &  u_{n-3} & u_{n-2}
        \end{bmatrix} \, .\]
    \item\label{thm:determinantal_3} There exist $H$-homogeneous polynomials $F_i,G_i \in \left( u,v,u_1,\ldots,u_{n-2} \right)$ such that $I_H$ is generated by the $2\times 2$ minors of the matrix \[\begin{bmatrix}
            F_1 & F_2 & \dots & F_n \\
            G_1 & G_2 & \dots & G_n
        \end{bmatrix} \, .\]
    \item\label{thm:determinantal_4} ${\rm PF}(H) = \{z+k,z+2k,\ldots,z+(n-1)k\}$ for some $z \geq 0$ and $k >0$.
\end{enumerate}
When this is the case, ${\rm PF}(H) = \{pq-(ix+1)p - \left( (n-i-2)y+1 \right) q \mid 0\leq i \leq n-2\}$, for the numbers $x,y\in \Z_{>0}$ defining $H \in \kwpqd$, and hence $k = |xp-yq|$.
\end{theorem}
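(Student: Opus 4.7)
The plan is to establish the cycle of implications $(1) \Rightarrow (2) \Rightarrow (3) \Rightarrow (4) \Rightarrow (1)$, with the final formula for $\mathrm{PF}(H)$ and the identity $k = |xp-yq|$ dropping out of the proof of $(1) \Rightarrow (4)$. The implication $(1) \Rightarrow (2)$ is Proposition~\ref{prop:determinantal}, and $(2) \Rightarrow (3)$ is immediate.

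For $(1) \Rightarrow (4)$, I would substitute $x_i = ix$ and $y_{i+1} = (n-2-i)y$ into the expression $g_i = pq - (x_i+1)p - (y_{i+1}+1)q$ supplied by Lemma~\ref{lemma:PF(H)}, so that $g_i = pq - (ix+1)p - ((n-2-i)y+1)q$ and $g_{i+1} - g_i = -xp + yq$, a nonzero constant since $\gcd(p,q)=1$ and $x, y \geq 1$. Hence $\mathrm{PF}(H)$, listed in the index order, is already an arithmetic progression with common difference $|xp-yq|$, which proves $(4)$ and the closing formula simultaneously.

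For $(3) \Rightarrow (4)$, name the top- and bottom-row entries of the given matrix by $F_j$ and $G_j$, $1 \leq j \leq n$. Because each minor $F_iG_j - F_jG_i$ is $H$-homogeneous, the difference $\deg(F_j) - \deg(G_j) =: k$ is independent of $j$. The ideal $I_H$ has codimension $n-1$, the maximum possible for the ideal of $2 \times 2$ minors of a $2 \times n$ matrix, so the Eagon-Northcott complex of this matrix is acyclic and, matched against the $\binom{n}{2}$ minimal generators of $I_H$, it is the minimal graded free resolution of $T/I_H$. The top free module in this complex has $n-1$ generators whose $H$-degrees form an arithmetic progression with common difference $|k|$. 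Using the standard identification of last-step Betti degrees (translated by $p+q+h_1+\cdots+h_{n-2}$) with the Pseudo-Frobenius elements in a Cohen-Macaulay semigroup ring, this forces $\mathrm{PF}(H)$ to be an AP.

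The central implication is $(4) \Rightarrow (1)$. Set $a_i = x_i - x_{i-1}$ and $b_i = y_i - y_{i+1}$ with $x_0 = y_{n-1} = 0$, so $a_i, b_i \geq 1$ and the telescoping sums satisfy $\sum a_i = x_{n-2} \leq q/2$, $\sum b_i = y_1 \leq p/2$. Lemma~\ref{lemma:PF(H)} gives the consecutive differences $d_i := g_{i+1} - g_i = -a_{i+1}p + b_{i+1}q$. Once one knows that the $g_i$ are monotone in $i$, each $d_i$ equals $\pm k$, so $-a_{i+1} p + b_{i+1} q$ is constant in $i$; the bounds on $a_{i+1}, b_{i+1}$ together with $\gcd(p,q)=1$ then force $a_{i+1} = x$ and $b_{i+1} = y$ for fixed $x, y \in \Z_{>0}$, giving $x_i = ix$, $y_i = (n-1-i)y$ and the conditions defining $\kwpqd$. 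The main obstacle is proving the monotonicity claim; I plan to argue by contradiction at any peak index $i^*$ with $g_{i^*-1} < g_{i^*} > g_{i^*+1}$ (the trough case is symmetric), writing $d_{i^*-1} = sk$ and $d_{i^*} = -tk$ with $s, t \geq 1$ and forming the combination $t d_{i^*-1} + s d_{i^*} = 0$ to obtain $(ta_{i^*} + sa_{i^*+1})p = (tb_{i^*} + sb_{i^*+1})q$; then $\gcd(p,q)=1$ with the partial-sum bounds $a_{i^*} + a_{i^*+1} \leq q/2$ and $b_{i^*} + b_{i^*+1} \leq p/2$ should force a contradiction, as the case $n=4$ (where one obtains $2a_{i^*} + a_{i^*+1} = q$ and hence $a_{i^*+1} \leq 0$) already illustrates.
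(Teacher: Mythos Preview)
Your scaffold matches the paper's: the same cycle $(1)\Rightarrow(2)\Rightarrow(3)\Rightarrow(4)\Rightarrow(1)$, with $(1)\Rightarrow(2)$ from Proposition~\ref{prop:determinantal} and $(2)\Rightarrow(3)$ trivial. For $(3)\Rightarrow(4)$ the paper simply invokes \cite{VanKien2020}; your Eagon--Northcott sketch is a correct reconstruction of that argument. Your separate $(1)\Rightarrow(4)$ computation via Lemma~\ref{lemma:PF(H)} is a clean way to obtain the closing formula, which the paper instead reads off at the end of its $(4)\Rightarrow(1)$ step.

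For $(4)\Rightarrow(1)$, the paper passes directly from ``$\mathrm{PF}(H)$ is an arithmetic progression'' to ``$g_i-g_{i+1}$ is the same for all $i$'' and then runs the divisibility-and-bound argument ($p(\alpha_i-\alpha_j)=q(\beta_j-\beta_i)$ with $|\alpha_i-\alpha_j|<q$, hence $\alpha_i=\alpha_j$). You have correctly noticed that this first step tacitly assumes the index order $g_0,\ldots,g_{n-2}$ already agrees with the AP order, and you try to justify that via a peak/trough contradiction. The observation is sound, but your proposed fix does not close the gap. At a peak $i^*$ you obtain $(ta_{i^*}+sa_{i^*+1})p=(tb_{i^*}+sb_{i^*+1})q$ and want $q\mid ta_{i^*}+sa_{i^*+1}$ to be impossible; for that you need $ta_{i^*}+sa_{i^*+1}<q$. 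The only bound you invoke is $a_{i^*}+a_{i^*+1}\le q/2$, which suffices when $s,t\le 2$---this is exactly why your $n=4$ illustration works, since there $\{s,t\}=\{1,2\}$---but for general $n$ the jump sizes $s,t$ can be as large as $n-2$, and $ta_{i^*}+sa_{i^*+1}$ can then exceed $q$. So as written this step is a genuine gap; you need either an independent proof that the $g_i$ are monotone in $i$ for $H\in KW(p,q)$, or an argument that bypasses monotonicity altogether.
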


\begin{proof}
\eqref{thm:determinantal_1} $\Rightarrow$ \eqref{thm:determinantal_2} is Proposition \ref{prop:determinantal}, \eqref{thm:determinantal_2} $\Rightarrow$ \eqref{thm:determinantal_3} is trivial and \eqref{thm:determinantal_3} $\Rightarrow$ \eqref{thm:determinantal_4} is proved in general for any numerical semigroup in Section 2 of \cite{VanKien2020}.

\eqref{thm:determinantal_4} $\Rightarrow$ \eqref{thm:determinantal_1} By Lemma \ref{lemma:PF(H)}, $PF(H)=\{g_i:=pq-(x_i+1)p-(y_{i+1}+1)q \mid 0 \leq i \leq n-2\}$, where $x_0=y_{n-1}=0$.
Since $PF(H)$ satisfies (4) by our hypothesis, we must have that the difference between any two consecutive elements is constant. Firstly, for any $0 \leq i \leq n-2$, 
\[\begin{split}
g_i-g_{i+1} &= pq-(x_i+1)p-(y_{i+1}+1)q  - [pq-(x_{i+1}+1)p-(y_{i+2}+1)q ] \\
&= p(x_{i+1}-x_i) +q (y_{i+2}-y_{i+1}) \, .
\end{split}\]
Let $\alpha_i = x_{i+1}-x_i$ and $\beta_i = y_{i+2}-y_{i+1}$. Now we must have that for any $0 \leq i <j \leq n-3$, $$p \alpha_i +q \beta_i = p \alpha_j + q \beta_j \implies p(\alpha_i - \alpha_j) = q (\beta_j -\beta_i).$$

Since $p$ and $q$ are relatively prime, there exists $l \in \Z$ such that $\alpha_i-\alpha_j = ql$. But $|\alpha_i-\alpha_j| = |x_{i+1}+x_j-x_i-x_{j+1}| \leq q-2$, so $l=0$. Thus, since $x_{i+1}>x_i$, there is some $x \in \Z_{>0}$ such that  $\alpha_i = \alpha_j = x$.  The recursive definition of $\alpha_i$ now gives us $x_i = ix$ for $1 \leq i \leq n-2$. If $x > \frac{q}{2(n-2)}$, then $x_{n-2} = (n-2)x > q/2$, a contradiction.

Similarly, $\beta_i = \beta_j = -y$, for some $y \in \Z_{>0}$ since $y_{i+2}< y_{i+1}$. This implies $y_{n-2}=y, y_{n-3}=2y,...,y_1=(n-2)y$. If $y > \frac{p}{2(n-2)}$ then $y_1 >p/2$, a contradiction.

We have shown that $H$ must be as in Definition \ref{determinantal}. In addition, when \eqref{thm:determinantal_4} holds, we have shown that ${\rm PF}(H) = \{pq-(ix+1)p - \left( (n-i-2)y+1 \right) q \mid 0\leq i \leq n-2\}$, which implies $k=|xp-yq|$.
\end{proof}

\begin{remark}
Let $3\leq p<q$ relatively prime integers and denote $p' = \lfloor p/2\rfloor$, and $q' = \lfloor q/2 \rfloor$. 
By Theorem \ref{thm:determinantal}, the cardinality of $\kwpqd$ is
\[|\kwpqd| = \sum_{n=3}^{p'+2} \left\lfloor \frac{p'}{n-2}\right\rfloor \left\lfloor \frac{q'}{n-2}\right\rfloor = \sum_{n=1}^{p'} \left\lfloor \frac{p'}{n}\right\rfloor \left\lfloor \frac{q'}{n}\right\rfloor \, ,\]
and the cardinality of $KW(p,q)$ is
\[|KW(p,q)| = \sum_{n=3}^{p'+2} {p' \choose n-2} {q' \choose n-2} = \sum_{n=1}^{p'} {p' \choose n}{q' \choose n} = {p'+q' \choose p'}-1 \, .\]
Thus, the proportion of semigroups in $KW(p,q)$ whose defining ideal is determinantal is 
\[\rho_D(p,q):= \frac{|\kwpqd|}{|KW(p,q)|} = \frac{\sum_{n=1}^{p'} \left\lfloor \frac{p'}{n}\right\rfloor \left\lfloor \frac{q'}{n}\right\rfloor}{{p'+q' \choose p'}-1} \, .\]
\end{remark}

\section{Betti Sequence of any $H \in KW(p,q)$}

Since the ideal $I_H \subset T = \k[u,v,u_1,\ldots,u_{n-2}]$ for any $H \in \kwpqd$ is generated by the $2\times 2$ minors of a $2\times n$ matrix and its height is $n-1$ $(=n-2+1)$, it is resolved by the Eagon-Northcott complex (Theorem 2 of \cite{Eagon1962}). In particular, if the minimal graded free resolution of $T/I_H$ is \[0 \to T^{\beta_{n-1}} \to T^{\beta_{n-2}} \to \dots \to T^{\beta_1} \to T \to T/I_H \to 0 \, ,\]
then the Betti numbers are given by the formula $\beta_i = i {n \choose i+1}$ for all $1\leq i \leq n-1$, by the construction of the Eagon-Northcott complex.
We want to expand this to compute the Betti sequence of more KW-semigroups. One way to do this is by using the fact that any two numerical semigroups of multiplicity $p$ lying in the interior of the same face of the Kunz cone $\mC_p$ have the same Betti sequence. \newline

In this section, we identify all $H \in KW(p,q)$ that lie on the interior of the same face of $\mC_p$ as some $H' \in \kwpqd$. To achieve this, we compute the Ap\'ery posets of $H \in KW(p,q)$ and use the fact that two semigroups lie on the same face of $\mC_p$ if and only if they have the same Ap\'ery posets.

We first recall how to construct the Apéry set and poset of any semigroup $H \in KW(p,q)$ from the lattice path defining $H$.

\begin{proposition}\label{prop:ApH}
Let $H = \langle p,q,h_1,\ldots,h_{n-2} \rangle \in KW(p,q)$ be the semigroup defined by the sequences $x_1<x_2<\dots < x_{n-2}$ and $y_1>y_2>\dots > y_{n-2}$, and set $y_{n-1} := 0$. The Ap\'ery set of $H$ is
\[{\rm Ap}(H) = \{\lambda q \mid 0\leq \lambda < p-y_1 \} \cup \left( \cup_{i=1}^{n-2} \{h_i +\lambda q \mid 0\leq \lambda < y_i-y_{i+1}\} \right) \, .\]
\end{proposition}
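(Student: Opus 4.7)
The plan is to show that the set on the right-hand side, which I denote $A$, coincides with $\ap(H)$. I would first observe that $A \subseteq H$ (each element is a non-negative combination of generators) and that $|A| = (p - y_1) + \sum_{i=1}^{n-2}(y_i - y_{i+1}) = p$ by telescoping, using $y_{n-1} = 0$. Reducing each element of $A$ modulo $p$ via $h_i \equiv -y_i q \pmod p$ writes it as $\mu q$ with $\mu$ ranging over $\{0,\dots,p-y_1-1\} \cup \bigcup_{i=1}^{n-2}\{-y_i,\dots,-y_{i+1}-1\} = \{-y_1,\dots,p-y_1-1\}$, a block of $p$ consecutive integers. Hence the elements of $A$ are pairwise incongruent modulo $p$, and since $|\ap(H)| = p$, it suffices to show each $a \in A$ is the smallest element of $H$ in its residue class.

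To verify this minimality, I fix $\lambda \in \{0,\dots,p-1\}$ and consider an arbitrary element $\alpha p + \beta q + \sum c_i h_i \in H$ (with $\alpha, \beta, c_i \ge 0$) in residue class $\lambda q \pmod p$. Minimizing over $\alpha, \beta$ for fixed $\vec c = (c_1,\dots,c_{n-2})$ yields $\alpha = 0$ and $\beta = b_{\min}(\vec c) := (\lambda + \sum c_i y_i) \bmod p$, giving a value $M(\vec c) := b_{\min}(\vec c)\, q + \sum c_i h_i$. The key step is to show $\min_{\vec c} M(\vec c)$ is attained at some $\vec c$ with $|\vec c| := \sum c_i \le 1$. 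Given $\vec c$ with $c_i, c_j \ge 1$ (allowing $i = j$ when $c_i \ge 2$), I use the semigroup relation $h_i + h_j = (q - x_i - x_j) p + (p - y_i - y_j) q$ (valid since $x_i + x_j \le q$ and $y_i + y_j \le p$) to write
\[
M(\vec c) - M(\vec c - e_i - e_j) = \bigl(b_{\min}(\vec c) - b_{\min}(\vec c - e_i - e_j)\bigr)\, q + h_i + h_j .
\]
The $b_{\min}$ difference is congruent to $y_i + y_j$ modulo $p$, lies in $\{-(p-1),\dots,p-1\}$, and equals either $y_i + y_j$ or $y_i + y_j - p$; substituting into the above gives $p(2q - x_i - x_j)$ or $p(q - x_i - x_j)$ respectively, both non-negative. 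Iterating, the minimum is reached at some $\vec c$ with $|\vec c| \le 1$.

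Finally, evaluating $M(\vec c)$ at $\vec c = \vec 0$ gives $\lambda q$, and at $\vec c = e_i$ gives $\lambda q + p(q - x_i)$ if $\lambda + y_i < p$ or $\lambda q - x_i p$ if $\lambda + y_i \ge p$. Thus the minimum in the class of $\lambda q$ is $\lambda q$ when $\lambda < p - y_1$, and otherwise $\lambda q - x_k p = h_k + (\lambda - p + y_k)q$, where $k = \max\{i : y_i \ge p - \lambda\}$ is forced by the monotonicity of the sequences $x_i, y_i$ (larger $x_i$ gives a smaller value, so the maximal $i$ allowed is optimal). By the definition of $k$, the offset $\mu := \lambda - p + y_k$ lies in $\{0, \dots, y_k - y_{k+1} - 1\}$, so this minimum belongs to $A$, completing the equality $A = \ap(H)$. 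The main obstacle is the two-case syzygy computation above, which turns on the mod-$p$ wraparound of $b_{\min}$ and must be analyzed carefully to guarantee non-negativity in both cases.
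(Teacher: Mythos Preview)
Your argument is correct. The counting and residue analysis in the first paragraph are straightforward, and the crucial reduction step---showing that $M(\vec c)\ge M(\vec c-e_i-e_j)$ via the relation $h_i+h_j=(q-x_i-x_j)p+(p-y_i-y_j)q$---goes through in both wraparound cases exactly as you say, since $x_i+x_j\le q$ and $y_i+y_j\le p$ are guaranteed by the KW constraints. The final comparison among $M(\vec 0)$ and the $M(e_i)$ then pins down the Ap\'ery representative in each class.

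The paper's proof, however, takes a shorter and structurally different route. Rather than minimizing over factorizations, it invokes the lattice-path description
\[
H=\langle p,q\rangle\cup\{pq-xp-yq\mid x\le x_i,\ y\le y_i\text{ for some }i\}
\]
from \cite{Kunz2014}. Starting from $\ap(\langle p,q\rangle)=\{0,q,\dots,(p-1)q\}$, one sees that the only residue classes whose Ap\'ery element changes are those of $-yq$ for $0<y\le y_1$, and in each such class the new minimum is the gap $pq-xp-yq$ with the largest admissible $x$, i.e.\ the point on the lattice path at height $y-1$. This yields the claimed set immediately. Your approach trades that external structural input for a self-contained computation: you never appeal to the gap description of $H$, instead using only the generators and the quadratic relations $f_{ij}$ (implicitly, via $h_i+h_j\in\langle p,q\rangle$). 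The paper's proof is cleaner if one already has the lattice-path characterization in hand; yours is more elementary and makes transparent why the binomial relations force the Ap\'ery set to have this shape.
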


\begin{proof}
Let $A$ be the Ap\'ery set of the numerical semigroup $\langle p,q \rangle$, $A = \{0,q,2q,\ldots,(p-1)q\}$. We can obtain the Ap\'ery set ${\rm Ap}(H)$ from $A$ as follows. 
Recall that 
\[H = \langle p,q \rangle \cup \{pq-xp-yq \mid x\leq x_i, y \leq y_i \text{ for some } 1\leq i \leq n-2 \} \, .\]
Note that $pq-xp-yq \equiv -yq \pmod{p}$, so the elements that we have to replace in $A$ are the ones congruent to $-yq$ modulo $p$ for $0<y\leq y_1$. For each one of these congruence classes, we choose the smallest element in $H$, i.e. the one with the largest $x$. This corresponds to the element in the lattice path defining $H$ that has $Y$-coordinate $y-1$.
Therefore, 
\[{\rm Ap}(H) = \{\lambda q \mid 0\leq \lambda < p-y_1 \} \cup \left( \cup_{i=1}^{n-3} \{h_i +\lambda q \mid 0\leq \lambda < y_i-y_{i+1}\} \right) \cup \{h_{n-2}+\lambda q \mid 0 \leq \lambda < y_{n-2}\} \, .\]
\end{proof}

\begin{remark}\label{rem:ApHhomog}
By Corollary 3.10 of \cite{Jafari2018}, ${\rm Ap}(H)$ is homogeneous since the variable $u$ appears in all non-homogeneous binomials of a minimal generating set of $I_H$. This means that for every element $z \in {\rm Ap}(H)$ all the factorizations of $z$ have the same length. 
\end{remark}

We now consider a poset structure on $\ap(H)$.

\begin{definition}
Let $S$ be a numerical semigroup with multiplicity $m$, and write $\ap (S) = \{a_0:=0,a_1,...,a_{m-1}\}$ so that $a_i \equiv i \pmod{m}$ for all $i$. The \textit{Ap\'ery poset} of $S$ is a poset $\mathcal{P}(S) = (\Z_m , \preceq)$, where $i \preceq j$ if and only if $a_j-a_i \in S$ for $i,j \in \Z_m $. 
We write $i \precdot j$ and say $j$ covers $i$ if $i \prec j$ and there is no $k$ such that $i \prec k \prec j$. 
\end{definition}

\begin{proposition} \label{prop:cover}
For all $i,j \in \Z_m$, $i \precdot j$ if and only if $a_j-a_i$ is a minimal generator of $S$.   
\end{proposition}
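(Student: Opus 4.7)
The proof naturally splits into two implications, both handled by contradiction using the defining minimality property of the Apéry set. Recall that for any $s \in S$ of residue $r$ modulo $m$, the element $a_r$ is the smallest element of $S$ in class $r$, so $a_r \le s$ and $s - a_r \in m \Z_{\ge 0} \subset S$.

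For the direction $(\Leftarrow)$, assume $a_j - a_i$ is a minimal generator of $S$. Then $a_j - a_i > 0$, so $i \prec j$, and any $k$ with $i \prec k \prec j$ would give a decomposition $a_j - a_i = (a_k - a_i) + (a_j - a_k)$ as a sum of two nonzero elements of $S$, contradicting minimality. Hence $i \precdot j$. This direction is essentially formal.

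For the direction $(\Rightarrow)$, which carries all the content, suppose $i \precdot j$ but $a_j - a_i = s_1 + s_2$ with $s_1, s_2 \in S \setminus \{0\}$. The plan is to produce an intermediate element $k$ with $i \prec k \prec j$ and derive a contradiction. The natural candidate is $k \in \Z_m$ defined as the residue class of $a_i + s_1$ (equivalently of $a_j - s_2$) modulo $m$. The main obstacle, and the crux of the proof, is pinning down the Apéry element $a_k$ exactly: by minimality $a_k \le a_i + s_1$, hence $a_k + s_2 \le a_j$; however $a_k + s_2 \in S$ lies in the same residue class as $a_i + s_1 + s_2 = a_j$, so the minimality of $a_j$ in its class forces $a_k + s_2 \ge a_j$. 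Combining these gives $a_k = a_j - s_2 = a_i + s_1$.

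From this identification both $a_k - a_i = s_1$ and $a_j - a_k = s_2$ lie in $S \setminus \{0\}$. The strict positivity of $s_1$ and $s_2$ ensures $a_k \neq a_i$ and $a_k \neq a_j$, so $k \neq i$ and $k \neq j$. Therefore $i \prec k \prec j$, contradicting $i \precdot j$ and completing the argument.
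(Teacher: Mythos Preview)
Your proof is correct and follows essentially the same route as the paper's. Both arguments hinge on showing that, given a nontrivial decomposition $a_j - a_i = s_1 + s_2$, the element $a_i + s_1$ is itself an Ap\'ery element, thereby producing an intermediate $k$ with $i \prec k \prec j$; the paper phrases this step as ``since $a_j \in \ap(H)$, then $a_i + \alpha \in \ap(H)$'' (the downward-closed property of the Ap\'ery set), while you spell it out via the sandwich $a_k \le a_i + s_1$ and $a_k + s_2 \ge a_j$, which is the same fact unwound. The only cosmetic difference is that the paper takes $\alpha$ to be a minimal generator from the start and shows the residual summand vanishes, whereas you argue by contradiction from an arbitrary nontrivial splitting; these are equivalent framings of the same idea.
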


\begin{proof}
Being $(\Leftarrow)$ trivial, let us prove $(\Rightarrow)$. Let $i,j \in \Z_m$ such that $i \precdot j$ and write $a_j = a_i+\alpha+\beta$ for some $\alpha,\beta \in H$ with $\alpha$ a minimal generator of $H$. Note that $a_i \prec a_i+\alpha \preceq a_j$. Since $a_j \in \ap(H)$, then $a_i+\alpha \in \ap(H)$, so $a_j=a_i+\alpha$ as $j$ covers $i$.
\end{proof}

 \begin{theorem} (Theorem 3.10, \cite{BGSOW}) \label{thm:KunzApery}
     Any two numerical semigroups with the same multiplicity $m$ belong to the interior of the same face of $\mathcal C_m$ if and only if their Ap\'ery posets are the same.
 \end{theorem}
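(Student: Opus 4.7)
The plan is to translate both sides of the equivalence into statements about which Kunz inequalities $a_i + a_j \geq a_{i+j}$ are tight at the Ap\'ery vector of the semigroup, and then invoke standard polyhedral geometry. The central observation is that the Ap\'ery poset $\mathcal{P}(S)$ of a numerical semigroup $S$ of multiplicity $m$ records precisely the set of tight Kunz inequalities at its Ap\'ery vector.

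First, I would establish the following key bijection: for any $i \in \Z_m \setminus \{0\}$ and any $k \in \Z_m \setminus \{0,i\}$, one has $i \prec k$ in $\mathcal{P}(S)$ if and only if the Kunz inequality indexed by the pair $(i, k-i)$ is an equality, that is, $a_i + a_{k-i} = a_k$. The reverse direction is immediate, since $a_i + a_{k-i} = a_k$ gives $a_k - a_i = a_{k-i} \in S$. The forward direction uses Ap\'ery-minimality: if $a_k - a_i \in S$, then because $a_k - a_i \equiv k-i \pmod m$ we get $a_k - a_i \geq a_{k-i}$, which combined with the Kunz inequality $a_i + a_{k-i} \geq a_k$ forces equality. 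The trivial relations $0 \prec k$ for $k \neq 0$ hold in every Ap\'ery poset and correspond to no Kunz inequality, so they are inert on both sides.

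Next, I would invoke the standard polyhedral fact that the relative interior of a face of a rational polyhedral cone is characterized by its set of tight defining inequalities: two points of $\mathcal{C}_m$ lie in the relative interior of the same face if and only if they satisfy the same subset of Kunz inequalities with equality. Combining this with the previous bijection, two numerical semigroups $S_1, S_2$ of multiplicity $m$ lie in the relative interior of the same face of $\mathcal{C}_m$ if and only if the same Kunz inequalities hold as equalities for both, which in turn is equivalent to the equality of their Ap\'ery posets $\mathcal{P}(S_1) = \mathcal{P}(S_2)$ on the common ground set $\Z_m$.

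The main obstacle I anticipate is not the geometry but the careful bookkeeping in the bijection --- specifically, confirming that the map from poset relations to tight Kunz inequalities is well-defined and information-preserving under the mod-$m$ index arithmetic. One must verify that the symmetry $a_i + a_j = a_{i+j}$ (which pairs the two relations $i \prec i+j$ and $j \prec i+j$ with the same single Kunz equality) does not cause any loss of information, and handle the boundary cases $i = 0$ and $i+j \equiv 0 \pmod m$ cleanly, so that the tight-inequality set and the poset structure genuinely determine each other. Once this dictionary is firmly in place, the theorem reduces to the polyhedral statement about relative interiors of faces of $\mathcal{C}_m$.
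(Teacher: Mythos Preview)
Your argument is correct and is essentially the standard proof of this result: the Ap\'ery poset relations $i \prec k$ (for $i \neq 0$) are in exact correspondence with the tight Kunz inequalities $a_i + a_{k-i} = a_k$, and the relative interior of a face of a polyhedral cone is characterized by the set of defining inequalities that hold with equality. Your handling of the bookkeeping (the $2$-to-$1$ symmetry, the trivial relations $0 \prec k$, and the excluded case $i+j \equiv 0$) is accurate.

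Note, however, that the present paper does not give its own proof of this theorem: it is quoted as Theorem~3.10 of \cite{BGSOW} and used as a black box. So there is no ``paper's proof'' to compare against here; your sketch simply reproduces the argument from the cited source.
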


\begin{example} \label{ex:Apery}
    Consider the semigroups $H= \la 8,17, 60,69,78 \ra $ and $G = \la 8,17, 53,62,55 \ra$ with Ap\'ery sets $\ap (H)  = \{0, 17,34,51,60,69,78,95\}$ and $\ap (G) = \{0, 17,34,51,68,53,62,55\}$. Their Ap\'ery posets are shown in Figure \ref{fig:Aperyposets}.
\end{example}

\begin{figure}[htbp]
\begin{subfigure}[b]{0.45\textwidth}
\centering
\begin{tikzpicture}
    \node (0) at (0,0) {$0$};
    \node (17) at (-1,1) {$1$};
    \node (34) at (-1,2) {$2$};
    \node (51) at (-1,3) {$3$};
    \node (60) at (0,1) {$4$};
    \node (69) at (1,1) {$5$};
    \node (78) at (2,1) {$6$};
    \node (95) at (1.5,2) {$7$};
    \draw (0) -- (17) -- (34) -- (51);
    \draw (0) -- (60);
    \draw (0) -- (69);
    \draw (0) -- (78);
    \draw (78) -- (95);
    \draw (95) -- (17);
\end{tikzpicture}
\caption{${\mathcal P}(H)$}
\end{subfigure}
\begin{subfigure}[b]{0.45\textwidth}
\centering
\begin{tikzpicture}
    \node (G0) at (7,0) {$0$};
    \node (G17) at (6,1) {$1$};
    \node (G34) at (6,2) {$2$};
    \node (G51) at (6,3) {$3$};
    \node (68) at (6,4) {$4$};
    \node (53) at (7,1) {$5$};
    \node (62) at (8,1) {$6$};
    \node (55) at (9,1) {$7$};
    \draw (G0) -- (G17) -- (G34) -- (G51) -- (68);
    \draw (G0) -- (53);
    \draw (G0) -- (62);
    \draw (G0) -- (55);
\end{tikzpicture}
\caption{${\mathcal P}(G)$}
\end{subfigure}
\caption{Apéry posets in Example \ref{ex:Apery}.}
\label{fig:Aperyposets}
\end{figure}
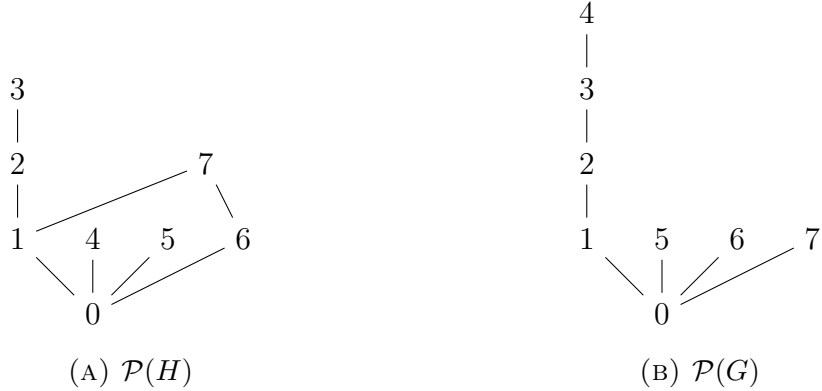

Now consider a semigroup $H \in KW(p,q)$ and let us see how its Ap\'ery poset look like.
For $0 \leq \lambda < p-y_1$ set $i_{0,\lambda}$ the label of $\lambda q$ in $\mathcal{P}(H)$, i.e. $0\leq i_{0,\lambda} < p$ and $i_{0,\lambda} \equiv \lambda q \pmod{p}$. 
Similarly, for each $1\leq j \leq n-2$ and $0\leq \lambda < y_j-y_{j+1}$, let $i_{j,\lambda}$ be the label of $h_j+\lambda q$ in $\mathcal{P}(H)$.

\begin{proposition}\label{prop:PHminrel}
Let $3\leq p<q$ be relatively prime and $H \in KW(p,q)$.
\begin{enumerate}
\item\label{prop:PHminrel_1} For all $i_{j_1,k_1},i_{j_2,k_2} \in \Z_p$, \[i_{j_1,k_1} \precdot i_{j_2,k_2} \Leftrightarrow \left\{ 
\begin{array}{lcl}
 j_2 = j_1 & \text{and} & k_2=k_1+1 \text{, or} \\
 j_1=0 & \text{and} & k_2 = k_1 \, .
\end{array}\right.\]
Thus, the Hasse diagram of $\mathcal{P}(H)$ is the one shown in Figure \ref{fig:hasse}.
\item\label{prop:PHminrel_2} The poset $\mathcal{P}(H)$ is graded for the rank function $\rho:\ap(H) \rightarrow \N$ defined by $\rho(i_{j,k}) = j+k$.
\end{enumerate}
\end{proposition}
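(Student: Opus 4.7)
The plan is to combine Proposition~\ref{prop:cover} with the description of $\ap(H)$ from Proposition~\ref{prop:ApH}. By Proposition~\ref{prop:cover}, a cover $i \precdot j$ in $\mathcal{P}(H)$ corresponds to $a_j - a_i$ being a minimal generator of $H$, hence an element of $\{p,q,h_1,\ldots,h_{n-2}\}$. Since every $a \in \ap(H)$ satisfies $a - p \notin H$, the multiplicity $p$ never arises as such a difference. For each $a_{i_{j_1,k_1}} = h_{j_1} + k_1 q$ (with the conventions $h_0 := 0$, $y_0 := p$, $y_{n-1} := 0$), I would then determine which of the sums $a_{i_{j_1,k_1}} + g$, for $g \in \{q,h_1,\ldots,h_{n-2}\}$, lie in $\ap(H)$; each such sum lying in $\ap(H)$ corresponds to an outgoing cover in the Hasse diagram.

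Three cases arise. Adding $q$ to $a_{i_{j_1,k_1}}$ yields $h_{j_1} + (k_1+1)q$, which lies in $\ap(H)$ precisely when $k_1+1 < y_{j_1} - y_{j_1+1}$ by Proposition~\ref{prop:ApH}, giving the cover $i_{j_1,k_1} \precdot i_{j_1, k_1+1}$. When $j_1 = 0$, adding $h_m$ yields $h_m + k_1 q$, which lies in $\ap(H)$ precisely when $k_1 < y_m - y_{m+1}$, giving the cover $i_{0,k_1} \precdot i_{m,k_1}$. When $j_1 \geq 1$, the binomial relation $f_{j_1,m}$ of $I_H$ from the introduction yields the identity
\[ h_{j_1} + h_m = (q - x_{j_1} - x_m)\, p + (p - y_{j_1} - y_m)\, q, \]
whose coefficients are both non-negative thanks to the bounds $x_i \leq q/2$ and $y_i \leq p/2$. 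Provided $x_{j_1}+x_m<q$, subtracting $p$ from $a_{i_{j_1,k_1}}+h_m$ keeps the result inside $\la p,q\ra \subset H$, so $a_{i_{j_1,k_1}} + h_m \notin \ap(H)$. Together these three cases exhaust all possibilities for covers and establish~(1).

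For~(2), I would verify that the rank function $\rho$ increases by exactly one across every cover produced in~(1), which suffices to conclude that $\mathcal{P}(H)$ is graded. Type~(i) covers raise $k$ by one while fixing $j$, and type~(ii) covers fix $k$ while changing $j$; Remark~\ref{rem:ApHhomog} guarantees that every element of $\ap(H)$ has a well-defined factorization length, providing a natural rank on $\mathcal{P}(H)$, and a direct step-by-step check along a saturated chain confirms the formula.

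The main obstacle will be the third case of the enumeration: ruling out that adding an $h_m$ to an element outside column $0$ can return to $\ap(H)$. The subtle point is the boundary case $x_{j_1}+x_m=q$, which forces $x_{j_1}=x_m=q/2$ and $j_1=m$; I would dispose of it by invoking the assumption that the presentation $\la p,q,h_1,\ldots,h_{n-2}\ra$ is minimal (equivalently $e(H)=n$), ruling out the degenerate configurations in which such a sum could land back in $\ap(H)$ and produce a spurious cover.
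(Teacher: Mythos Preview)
Your approach is valid and somewhat different from the paper's. The paper uses homogeneity (Remark~\ref{rem:ApHhomog}) up front to reduce the problem: since a cover must go from factorization length $\ell$ to $\ell+1$, the only potential ``extra'' cover is between $h_j+\lambda q$ and $h_i+(\lambda+1)q$ with $i\neq j$, and the paper then shows by a short number-theoretic argument that $h_i-h_j+q$ cannot be a minimal generator. You instead bypass the homogeneity reduction and argue directly from the binomial relation $f_{j_1,m}$, showing that $h_{j_1}+h_m-p\in\langle p,q\rangle$ so that $a_{i_{j_1,k_1}}+h_m\notin\ap(H)$. Both routes work; yours is closer to the explicit presentation of $I_H$, while the paper's is a cleaner use of the graded structure.

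There is one genuine gap. Your handling of the boundary $x_{j_1}+x_m=q$ is not correct as stated: minimality of the presentation $\langle p,q,h_1,\ldots,h_{n-2}\rangle$ does not by itself rule out $2h_{n-2}+k_1q\in\ap(H)$ when $x_{n-2}=q/2$. What actually kills this case is a parity/homogeneity argument you do not give: $x_{n-2}=q/2$ forces $q$ even, hence $p$ odd; on the other hand $2h_{n-2}+k_1q=(p-2y_{n-2}+k_1)q$, and if this lies in $\ap(H)$ then homogeneity equates its two factorization lengths, $2+k_1=p-2y_{n-2}+k_1$, giving $p=2+2y_{n-2}$ even, a contradiction. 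So the boundary is vacuous, but you should supply this reasoning rather than appeal to minimality. For part~(2), note that your check ``type~(ii) covers fix $k$ while changing $j$'' does not by itself show the rank increases by one; the clean argument is exactly the homogeneity remark you cite, which makes factorization length the grading.
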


\begin{proof}
Note that $(\lambda+1)q-\lambda q = q$, $\left( h_j+(\lambda+1)q\right) - \left(h_j+\lambda q\right) =q$, $\left( h_j + \lambda q \right) - \lambda q = h_j$, so $i_{j,\lambda} \precdot i_{j,\lambda+1}$ and $i_{0,\lambda} \precdot i_{j,\lambda}$ for all $j,\lambda$ by Proposition \ref{prop:cover}. Let us prove that there are no more covering relations. By Remark \ref{rem:ApHhomog} and Proposition \ref{prop:cover}, it suffices to prove that $\left( h_i + (\lambda+1)q \right) - \left( h_j + \lambda q \right) = h_i-h_j +q$ is not a minimal generator of $H$ when $i\neq j$ and $\lambda \geq 0$. Note that $h_i-h_j+q\neq q$ since $i\neq j$, and $h_i-h_j+q\neq p$ because $h_i+q,h_j \in \ap(H)$. Now suppose that $h_i-h_j+q = h_k$ for some $1\leq k\leq n-2$, $k \neq i,j$. Then \[(x_j+x_k-x_i)p + (y_j+y_k-y_i+1)q = pq \, .\] 
Thus, $q$ divides $x_j+x_k-x_i$ and since $3-q/2 \leq x_j+x_k-x_i \leq q-2$, then $x_j+x_k -x_i = 0$, so $x_i > x_j$ and hence $i>j$. With a similar argument, one can prove that $y_i=y_j+y_k+1 > y_j$, so $i<j$, a contradiction. This completes the proof of part \eqref{prop:PHminrel_1}, and part \eqref{prop:PHminrel_2} is a direct consequence of part \eqref{prop:PHminrel_1} and Remark \ref{rem:ApHhomog}.
\end{proof}

\begin{figure}[htbp]
\centering
\includegraphics[width=0.5\textwidth]{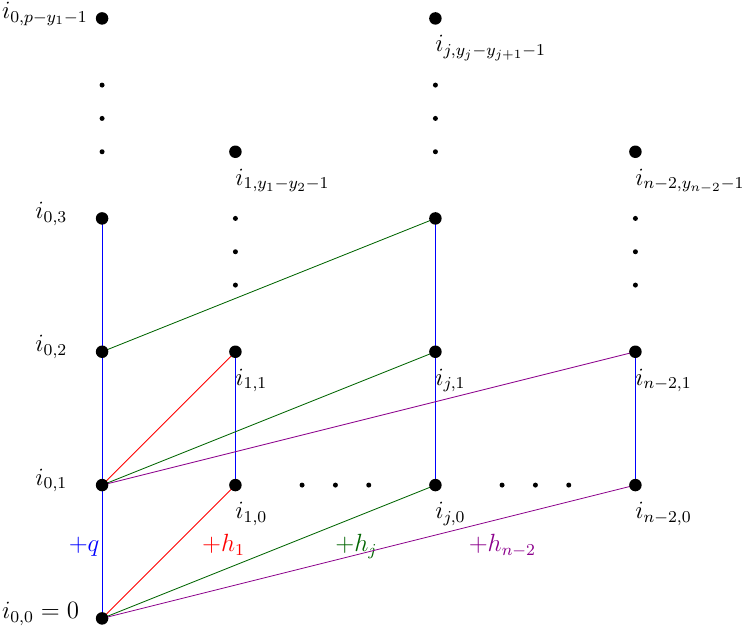}
\caption{Hasse diagram of the Ap\'ery poset of a semigroup $H \in KW(p,q)$.}
\label{fig:hasse}
\end{figure}

Let $H, G \in KW(p,q)$ be of the same embedding dimension $n$. Write $H= \la p,q, h_1,...,h_{n-2} \ra$ and $G= \la p,q, g_1,...,g_{n-2} \ra$. Further, write $h_i = pq-x_ip-y_iq$ and $g_i = pq - z_ip-w_iq$, $1 \leq i \leq n-2$.

\begin{proposition}\label{prop:KWfaces}
    Let $H, G \in KW(p,q)$. Then $H$ and $G$ belong to the interior of the same face of the Kunz cone $\mC_p$ if and only if
    \begin{enumerate}
        \item $e(H)=e(G)=n$, and 
        \item $y_i = w_i$ for all $1 \leq i \leq n-2$.
    \end{enumerate}
\end{proposition}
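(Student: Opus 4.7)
The plan is to invoke Theorem \ref{thm:KunzApery}, which reduces the problem to showing $\mathcal{P}(H) = \mathcal{P}(G)$ as posets on $\Z_p$ if and only if $e(H) = e(G) = n$ and $y_i = w_i$ for all $1 \leq i \leq n-2$.

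For the forward direction, I would use Proposition \ref{prop:ApH} to compute the vertex labels of $\mathcal{P}(H)$ in $\Z_p$ explicitly: the spine elements satisfy $i_{0,\lambda} \equiv \lambda q \pmod{p}$ for $0 \leq \lambda < p - y_1$, and the branch elements satisfy $i_{j,\lambda} \equiv (\lambda - y_j)q \pmod{p}$ for $1 \leq j \leq n-2$, $0 \leq \lambda < y_j - y_{j+1}$, using that $h_j = pq - x_j p - y_j q \equiv -y_j q \pmod{p}$. The crucial observation is that these labels, as well as the admissible ranges of $(j,\lambda)$, depend only on $n$, $p$, $q$, and the $y_i$'s, and \emph{not} on the $x_i$'s. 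When $H$ and $G$ agree on $n$ and the $y$-sequence, their Apéry posets have identical vertex labels in $\Z_p$, and since the cover relations are governed by Proposition \ref{prop:PHminrel} through purely combinatorial conditions on the index pairs $(j,\lambda)$, they also match.

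For the converse, assume $\mathcal{P}(H) = \mathcal{P}(G)$. By Proposition \ref{prop:PHminrel}, the atoms (covers of $0$) of $\mathcal{P}(H)$ are $i_{0,1}$ and $i_{j,0}$ for $1 \leq j \leq e(H)-2$, giving $e(H)-1$ atoms in total, so matching the atom counts forces $e(H) = e(G) = n$. As a subset of $\Z_p$, the atom set of $\mathcal{P}(H)$ is $\{q \bmod p\} \cup \{-y_i q \bmod p \mid 1 \leq i \leq n-2\}$, and analogously with $w_i$'s for $G$. Since $p \geq 3$ and $0 < y_i \leq p/2 < p-1$, the residue $-y_i \bmod p$ is never equal to $1$, so $-y_i q \not\equiv q \pmod{p}$; the $-y_i q$ are pairwise distinct in $\Z_p$ since $q$ is a unit modulo $p$. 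Matching the two atom sets, cancelling the common element $q$, and multiplying by $q^{-1}$ yields $\{-y_i \bmod p\} = \{-w_i \bmod p\}$. Since $y_i, w_i \in [1, p/2]$, this lifts to $\{y_i\} = \{w_i\}$ as subsets of $\Z$, and the strict decrease of both sequences forces $y_i = w_i$ for every $i$.

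The only slightly delicate point is verifying that $q$ and the $-y_i q$ are pairwise distinct modulo $p$, which uses precisely the KW constraints $y_i \leq p/2$ and $p \geq 3$. Beyond that, the argument is bookkeeping with Apéry labels, and no tool is needed beyond Propositions \ref{prop:ApH}, \ref{prop:PHminrel}, and Theorem \ref{thm:KunzApery}.
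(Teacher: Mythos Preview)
Your proof is correct and follows the same strategy as the paper: reduce to equality of Ap\'ery posets via Theorem~\ref{thm:KunzApery}, then use the structure described in Propositions~\ref{prop:ApH} and~\ref{prop:PHminrel}. Your atom analysis for the converse direction makes explicit what the paper compresses into the single assertion that the Ap\'ery poset is ``completely determined'' by $n$ and the sequence $y_1>\dots>y_{n-2}$.
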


\begin{proof}
By Proposition \ref{prop:PHminrel}, the Apéry poset of any semigroup $H \in KW(p,q)$ is completely determined by the embedding dimension of $H$, $n$, and the sequence $p/2 \geq y_1>\dots>y_{n-2}>0$. Thus, the result follows from Theorem \ref{thm:KunzApery}.
\end{proof}

\begin{corollary} \label{cor:Kunzcone}
    Let $H\in KW(p,q)$ of embedding dimension $n$ be such that $y_i =(n-i-1)y$ for some $y \in \N$ with $(n-2)y \leq p/2$. Then the Betti numbers of $\k[H]$ are \[\beta_i = i {n \choose i+1}, \ 1 \leq i \leq n-1 \, .\]
\end{corollary}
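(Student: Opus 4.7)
My plan is to reduce the corollary to the determinantal case already handled in the paper via the Kunz cone machinery developed in Section 3.

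The key idea is to exhibit a companion semigroup $H' \in \kwpqd$ that sits on the same face of $\mC_p$ as $H$, and then invoke Theorem 2.7 of \cite{Braun2023} (quoted in the Introduction) together with the Eagon--Northcott calculation recalled at the start of Section 3. Concretely, I would define $H' = \la p,q, h'_1,\ldots,h'_{n-2} \ra$ with $h'_i = pq - ip - (n-i-1)yq$; that is, I take $x = 1$ and the same $y$ that is given by the hypothesis. To see that $H' \in \kwpqd$, I would check the defining inequalities of Definition \ref{determinantal}: the bound $(n-2)y \leq p/2$ is assumed, and $(n-2)x = n-2 \leq q/2$ holds because $H \in KW(p,q)$ already forces $x_{n-2} \leq q/2$ and $x_{n-2} \geq n-2$, so $q \geq 2(n-2)$.

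Once $H'$ is in hand, the comparison between $H$ and $H'$ is immediate: both have embedding dimension $n$, and by construction $y_i = (n-i-1)y = y'_i$ for all $1 \leq i \leq n-2$. Therefore Proposition \ref{prop:KWfaces} applies and $H, H'$ lie in the interior of the same face of the Kunz cone $\mC_p$. By Theorem 2.7 of \cite{Braun2023}, this forces $\beta_i(\k[H]) = \beta_i(\k[H'])$ for all $i$.

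To conclude, I would use Proposition \ref{prop:determinantal}: since $H' \in \kwpqd$, the ideal $I_{H'}$ is generated by the $2\times 2$ minors of a $2\times n$ matrix and has height $n-1$, so the Eagon--Northcott complex (\cite{Eagon1962}) gives a minimal free resolution of $T/I_{H'}$. Reading off its ranks yields $\beta_i(\k[H']) = i \binom{n}{i+1}$ for $1 \leq i \leq n-1$, which combined with the equality above proves the corollary. I do not anticipate a genuine obstacle here; the whole argument is essentially bookkeeping, with the only point requiring care being the verification that the chosen $x=1$ actually satisfies the $\kwpqd$ bound, which reduces to the embedding-dimension inequality $q \geq 2(n-2)$ already built into $H \in KW(p,q)$.
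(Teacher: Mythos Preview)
Your proposal is correct and follows essentially the same approach as the paper: construct the companion semigroup $H' \in \kwpqd$ with $x=1$ and the given $y$, apply Proposition \ref{prop:KWfaces} to place $H$ and $H'$ on the same face of $\mC_p$, invoke Theorem~2.7 of \cite{Braun2023}, and read off the Betti numbers from the Eagon--Northcott resolution of $I_{H'}$. Your added check that $n-2 \leq q/2$ (via $x_{n-2} \geq n-2$) is a detail the paper leaves implicit.
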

\begin{proof}
Consider the semigroup $H' \in \kwpqd$ defined by the sequence $x_1 = 1<x_2=2<\dots < x_{n-2} = n-2$ and the sequence $y_1>y_2>\dots>y_{n-2}$, $y_i = (n-1-i)y$. By Proposition \ref{prop:KWfaces}, $H$ and $H'$ lie in the interior of the same face of the Kunz cone. Therefore, $\k[H]$ and $\k[H']$ have the same Betti sequence by Theorem 2.7 of \cite{Braun2023}, that is, $\beta_i = i {n \choose i+1}$, $1\leq i \leq n-1$ by the construction of the Eagon-Northcott complex for $I_{H'}$.
\end{proof}

\begin{example}\label{ex:Aperysameface}
    In Example \ref{ex:Apery}, $G$ is not in $KW_D(8,17)$ as $53,62,55$ are not in an arithmetic sequence. However, $G$ is in the same face of $\mC_8$ as $G'= \la 8,17,69,70,71 \ra$. Note that for both $G$ and $G'$, $y_1=3,y_2=2,y_3=1$. Their Betti sequence is $4,15,20,10,1$, and following is the Hasse diagram of $\mathcal P(G)$ and $\mathcal P(G')$:
    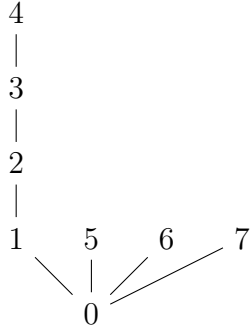
\begin{figure}[htbp]
\centering
\begin{tikzpicture}
    \node (G0) at (7,0) {$0$};
    \node (G17) at (6,1) {$1$};
    \node (G34) at (6,2) {$2$};
    \node (G51) at (6,3) {$3$};
    \node (68) at (6,4) {$4$};
    \node (53) at (7,1) {$5$};
    \node (62) at (8,1) {$6$};
    \node (55) at (9,1) {$7$};
    \draw (G0) -- (G17) -- (G34) -- (G51) -- (68);
    \draw (G0) -- (53);
    \draw (G0) -- (62);
    \draw (G0) -- (55);
\end{tikzpicture}
\caption{${\mathcal P}(G)$ and ${\mathcal P}(G')$}
\label{fig:Aperyposetssameface}
\end{figure}
\end{example}

\begin{remark}
By Corollary \ref{cor:Kunzcone}, the number of numerical semigroups in $KW(p,q)$ whose Betti numbers are $\beta_i = i {n\choose i+1}$, $1\leq i \leq n-1$, is at least $\sum_{n=1}^{p'} \left\lfloor \frac{p'}{n} \right\rfloor {q' \choose n}$, where $p' = \lfloor p/2 \rfloor$ and $q' = \lfloor q/2 \rfloor$.
\end{remark}

It is interesting to note that even if $H\in KW(p,q)$ does not satisfy the hypothesis of Corollary \ref{cor:Kunzcone}, the conclusion still seems to hold. In particular, if the $y_1,...,y_{n-2}$ are not in an arithmetic sequence, is it still true that $\beta_i = i {n \choose i+1}$ for $1 \leq i \leq n-1$? We give an example to support a positive answer:
\begin{example}
    $H= \la 8,17, 36,45,63 \ra \in KW(8,17) \bs KW_D(8,17)$. The Betti sequence of $\k [H]$ is $4,15,20,10,1$, and yet, $H$ is not even in the same face as some $H' \in KW_D(8,17)$.
\end{example}

This is just one of many examples but we have yet to find a KW-semigroup of embedding dimension $n-1$, whose Betti numbers are not $\beta_i = i {n \choose i+1}$, $1\leq i \leq n-1$. This is clearly true for embedding dimension 3.  With this in mind, we end this article with the following conjecture:

\begin{conjecture}
    The Betti numbers of any numerical semigroup $H \in KW(p,q)$ of embedding dimension $n$ are   $\beta_i = i {n \choose i+1}$, $1\leq i \leq n-1$.
\end{conjecture}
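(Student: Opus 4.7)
The starting point is to reframe the conjecture via the Kunz-cone perspective developed in Section 2. By Proposition \ref{prop:KWfaces} combined with Theorem \ref{thm:KunzApery}, the face of $\mC_p$ on whose interior $H$ lies is determined by the embedding dimension $n$ and the tuple $(y_1,\dots,y_{n-2})$; by Theorem 2.7 of \cite{Braun2023}, the Betti numbers of $\k[H]$ depend only on that face. Hence the conjecture reduces to showing that for every admissible sequence $p/2 \geq y_1 > \dots > y_{n-2} > 0$, the corresponding face of $\mC_p$ yields $\beta_i = i\binom{n}{i+1}$. Corollary \ref{cor:Kunzcone} already handles the case of an arithmetic $y$-progression; the task is to remove that hypothesis.

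The main approach I would pursue is to build a minimal graded free resolution of $T/I_H$ directly from the combinatorial data encoded in the Apéry poset $\mathcal{P}(H)$ of Proposition \ref{prop:PHminrel}. Its Hasse diagram has a rigid shape, a central spine $i_{0,0} \precdot i_{0,1} \precdot \cdots$ from which $n-2$ branches of lengths $y_j - y_{j+1}$ emanate at the first few levels, and for $H \in \kwpqd$ the Eagon-Northcott complex on the matrix $A$ of Proposition \ref{prop:determinantal} provides the resolution when all branches have uniform length. The plan is to exhibit a cellular resolution supported on (a CW-thickening of) the Hasse diagram whose $i$th chain module has rank exactly $i\binom{n}{i+1}$ regardless of the branch lengths. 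The binomial generators $f_{ij}, g_i, \eta_1, \eta_2$ correspond to distance-$2$ relations in $\mathcal{P}(H)$, and higher syzygies should correspond to longer chains and antichains.

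If a direct cellular construction proves intractable, a fallback is induction on $n$: remove a well-chosen generator $h_k$, relate the defining ideal of the smaller KW semigroup $H' = \la p, q, h_1,\dots,\widehat{h_k},\dots,h_{n-2}\ra$ to $I_H$ via a mapping cone, and hope that the Pascal-style identity
\[i\binom{n}{i+1} = i\binom{n-1}{i+1} + (i-1)\binom{n-1}{i} + \binom{n-1}{i}\]
matches the terms of the induced long exact sequence in $\mathrm{Tor}$. The main subtlety here is that generator removal shifts both the $x$- and the $y$-sequences, so controlling the maps between the two resolutions requires careful tracking of $H$-degrees and of which non-minimal pieces of the mapping cone cancel.

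The principal obstacle in either approach is that, by Theorem \ref{thm:determinantal}, for $H \notin \kwpqd$ the ideal $I_H$ is genuinely non-determinantal, and the face of $\mC_p$ in question contains \emph{no} semigroup of $\kwpqd$; thus neither Eagon-Northcott nor Corollary \ref{cor:Kunzcone} can be invoked, and the Betti numbers must be established intrinsically from the Apéry-poset combinatorics without the crutch of a $2\times n$ matrix. As a concrete sanity check before attacking the general case, I would first verify the embedding-dimension-$4$ situation, where the predicted Betti sequence is $(1,6,8,3)$, for several non-arithmetic $y$-sequences, in order to identify the combinatorial mechanism by which the branch-length irregularities of $\mathcal{P}(H)$ cancel out in the final count.
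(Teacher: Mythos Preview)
The statement is presented in the paper as an open \emph{conjecture}; the paper offers no proof, only the single supporting example $H = \langle 8,17,36,45,63\rangle$ and the observation that the embedding-dimension-$3$ case is trivial. There is therefore no paper proof to compare your attempt against.

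Your submission is likewise not a proof but a research plan, and you are candid about this. The reduction in your first paragraph is correct and is exactly what the paper establishes up through Corollary~\ref{cor:Kunzcone}: the Betti numbers depend only on $n$ and the $y$-sequence, so what remains is the case of a non-arithmetic $y$-sequence. Your two proposed attacks---a cellular resolution read off from the Ap\'ery poset, or induction on $n$ via a mapping cone---are plausible directions, and your Pascal identity for $i\binom{n}{i+1}$ is correct, but neither strategy is carried out; the ``principal obstacle'' you name at the end is precisely the open content of the conjecture, not a technicality you have surmounted. In the mapping-cone approach there is an additional gap you do not flag: $I_{H'}$ lives in a polynomial ring with one fewer variable, so before any mapping cone can be formed you need a short exact sequence of $T$-modules linking $T/I_H$ to something built from $H'$, and no candidate is proposed. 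Finally, your assertion that the relevant face of $\mC_p$ contains no determinantal KW semigroup is only argued for the fixed $q$; it is at least conceivable that some $H'' \in KW_D(p,q')$ with a different $q'$, or some non-KW semigroup of multiplicity $p$ with a known resolution, lies on the same face of $\mC_p$, and that easier route deserves to be ruled in or out before the harder constructions.
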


\printbibliography

@article{KUNZ2017397,
title = {On the deviation and the type of certain local Cohen–Macaulay rings and numerical semigroups},
journal = {Journal of Algebra},
volume = {478},
pages = {397-409},
year = {2017},
issn = {0021-8693},
doi = {https://doi.org/10.1016/j.jalgebra.2017.01.041},
author = {Kunz, E. and Waldi, R.},
keywords = {Cohen–Macaulay ring, Deviation, Type, Embedding dimension, Numerical semigroup, Geometrical illustration, Relation ideal},
abstract = {In J. Herzog and E. Kunz (1973) [6] it was shown that for any pair (d,t)∈N×N+ with (d,t)≠(1,1) there exists a local Cohen–Macaulay ring R having deviation d(R)=d and type t(R)=t. By E. Kunz (1974) [7] the case d(R)=1,t(R)=1 cannot occur. In this paper certain Cohen–Macaulay rings are studied for which there are close relations between deviation, type and embedding dimension. Similar relations for other classes of local rings have been proved in the recent paper by L. Sharifan (2014) [15]. Our relations will be applied to numerical semigroups (or equivalently monomial curves) and lead also to some further cases, generalizing E. Kunz (2016) [8] with ring-theoretic proofs, in which a question of H. Wilf (1978) [16] has a positive answer.}
}

@article{Kunz2014,
  title = {Geometrical illustration of numerical semigroups and of some of their invariants},
  volume = {89},
  ISSN = {1432-2137},
  doi = {http://dx.doi.org/10.1007/s00233-014-9599-7},
  number = {3},
  journal = {Semigroup Forum},
  publisher = {Springer Science and Business Media LLC},
  author = {Kunz,  E. and Waldi,  R.},
  year = {2014},
  month = jun,
  pages = {664–691}
}

@article{BGSOW,
  title={Wilf’s conjecture in fixed multiplicity},
  author={Bruns, Winfried and Garcia-Sanchez, Pedro and O’Neill, Christopher and Wilburne, Dane},
  journal={International Journal of Algebra and Computation},
  volume={30},
  number={04},
  pages={861--882},
  year={2020},
  publisher={World Scientific},
doi = {https://doi.org/10.1142/S021819672050023X}
}

@Inbook{VanKien2020,
author="Van Kien, Do
and Matsuoka, Naoyuki",
editor="Barucci, Valentina
and Chapman, Scott
and D'Anna, Marco
and Fr{\"o}berg, Ralf",
title="Numerical Semigroup Rings of Maximal Embedding Dimension with Determinantal Defining Ideals",
bookTitle="Numerical Semigroups : IMNS 2018",
year="2020",
publisher="Springer International Publishing",
address="Cham",
pages="185--196",
abstract="We give a criterion of a numerical semigroup ring for having the defining ideal generated by 2{\thinspace}{\texttimes}{\thinspace}2-minors of a 2{\thinspace}{\texttimes}{\thinspace}n matrix in terms of pseudo-Frobenius numbers when the numerical semigroup has maximal embedding dimension. The ring-theoretic properties of a symbolic Rees algebra of the defining ideal are also explored.",
isbn="978-3-030-40822-0",
doi="https://doi.org/10.1007/978-3-030-40822-0_12"
}

@article{Jafari2018,
abstract = {We introduce the concept of homogeneous numerical semigroups and show that all homogeneous numerical semigroups with Cohen–Macaulay tangent cones are of homogeneous type. In embedding dimension three, we classify all numerical semigroups of homogeneous type into numerical semigroups with complete intersection tangent cones and the homogeneous ones which are not symmetric with Cohen–Macaulay tangent cones. We also study the behavior of the homogeneous property by gluing and shiftings to construct large families of homogeneous numerical semigroups with Cohen–Macaulay tangent cones. In particular we show that these properties fulfill asymptotically in the shifting classes. Several explicit examples are provided along the paper to illustrate the property.},
author = {Jafari, Raheleh and {Zarzuela Armengou}, Santiago},
issn = {1432-2137},
journal = {Semigroup Forum},
number = {2},
pages = {278--306},
title = {{Homogeneous numerical semigroups}},
doi = {https://doi.org/10.1007/s00233-018-9941-6},
volume = {97},
year = {2018}
}

@article{Eagon1962,
title={Ideals defined by matrices and a certain complex associated with them},
author={Eagon, John A and Northcott, Douglas Geoffrey},
journal={Proceedings of the Royal Society of London. Series A. Mathematical and Physical Sciences},
volume={269},
number={1337},
pages={188--204},
year={1962},
publisher={The Royal Society London},
doi = {https://doi.org/10.1098/rspa.1962.0170}
}

@article{Braun2023,
  title={Minimal free resolutions of numerical semigroup algebras via Ap{\'e}ry specialization},
  author={Braun, Benjamin and Gomes, Tara and Miller, Ezra and O’Neill, Christopher and Sobieska, Aleksandra},
  journal={Pacific Journal of Mathematics},
  volume={334},
  number={2},
  pages={211--231},
  year={2025},
  publisher={Mathematical Sciences Publishers},
doi = {https://doi.org/10.2140/pjm.2025.334.211}
}

@article{Singh2024,
  title = {A Class of Numerical Semigroups Defined by Kunz and Waldi - Their Principal Matrices and Structure},
  ISSN = {1793-6829},
  doi = {http://dx.doi.org/10.1142/S0219498825410130},
  journal = {Journal of Algebra and Its Applications},
  publisher = {World Scientific Pub Co Pte Ltd},
  author = {Singh,  Srishti and Srinivasan,  Hema},
  year = {2025},
  month = feb 
}

@article{Goto2018,
title = {Pseudo-Frobenius numbers versus defining ideals in numerical semigroup rings},
journal = {Journal of Algebra},
volume = {508},
pages = {1-15},
year = {2018},
issn = {0021-8693},
doi = {https://doi.org/10.1016/j.jalgebra.2018.04.025},
author = {Shiro Goto and Do Van Kien and Naoyuki Matsuoka and Hoang {Le Truong}},
keywords = {Cohen–Macaulay ring, Gorenstein ring, Almost Gorenstein ring, Canonical module, Numerical semigroup, Pseudo-Frobenius number, Minimal free resolution},
abstract = {The structure of the defining ideal of the semigroup ring k[H] of a numerical semigroup H over a field k is described, when the pseudo-Frobenius numbers of H are multiples of a fixed integer.}
}

\end{document}